\def\beeq{\begin{equation}}      \def\eneq{\end{equation}}
\def\beeqy{\begin{eqnarray}}     \def\eneqy{\end{eqnarray}}
\def\bece{\begin{center}}        \def\ence{\end{center}}
\def\ba {\begin{array}}          \def\ea {\end{array}}
\def\bess{\begin{eqnarray*}}     \def\eess{\end{eqnarray*}}
\def\bes{\begin{split}}          \def\ens{\end{split}}
\def\bali{\begin{align}}         \def\enali{\end{align}}
\def\bali{\begin{remark}}         \def\enali{\end{remark}}
\def\s1{\sqrt{-1}}
\theoremstyle{plain}
\newtheorem{theorem}{\noindent{\bf Theorem}}[section]
\newtheorem{proposition}[]{\noindent{\bf Proposition}}[section]
\newtheorem{lemma}[]{\noindent{\bf Lemma}}[section]
\newtheorem{corollary}[]{\noindent{\bf Corollary}}[section]
\newtheorem{example}[]{\noindent{\bf Example}}[section]
\newtheorem{remark}[]{\noindent{\bf Remark}}[section]
\newtheorem{conjecture}[]{\noindent{\bf Conjecture}}[section]
        \def\bin#1#2 {{#1\choose#2}}
           \def\dfrac#1#2 {{\displaystyle{#1\over#2}}}
           \def\din#1#2 {{\displaystyle{#1\choose#2}}}
\def\a.{{\rm \ddot{a}}}
\begin{document}
\renewcommand{\thesection}{\arabic{section}}
\renewcommand{\theequation}{\thesection.\arabic{equation}}

\baselineskip=18pt

\title{$\ \ $ A note on rational maps with three branching points on the Riemann sphere}
\author { Zhiqiang Wei,~~Yingyi Wu, ~~Bin Xu}
\maketitle

\begin{abstract}
Studying the existence of rational maps with given branching datum is a classical problem in the field of complex analysis and algebraic geometry. This problem dates back to Hurwitz and remains open to this day. In this paper, we utilize complex analysis to establish a property of rational maps with 3 branching points on the Riemann sphere. Given two compact Riemann surfaces $M$ and $N$, a pair $(d,\mathcal{D})$ of an integer $d\geq2$  and a collection $\mathcal{D}$ of nontrivial partitions of $d$ is called a candidate branching datum if it satisfies the Riemann-Hurwitz formula. And a candidate branching datum is considered exceptional when there is no rational map realization for it. As applications, we present some new types of exceptional branching datum. These results cover some previous  results mentioned in \cite{EKS84,PP06,Zhu19}. We also deduce the realizability of a certain type of candidate branching datum on the Riemann sphere.
  \vspace*{2mm}

\noindent{\bf Key words}\hskip3mm  Branched cover, Hurwitz problem, Rational map.

\vspace*{2mm}
\noindent{\bf 2020 MR Subject Classification:}\hskip3mm 57M12

\end{abstract}

\section{Background and main results}
Studying the existence of branched covers between two compact Riemann surfaces with given branching datum is an important problem in the field of complex analysis and algebraic geometry. This problem is commonly referred to as the Hurwitz existence problem. In other words, given two compact Riemann surfaces $M$ and $N$, along with a collection $\mathcal{D}$ of partitions of a positive integer $d$, the question is whether there exists a degree-$d$ branched cover $f:M\rightarrow N$ with $\mathcal{D}$ as branching datum.\par
Let $M$ and $N$ be compact Riemann surfaces, and let their respective Euler characteristics be denoted by $\chi(M)$ and $\chi(N)$. A smooth map $f:M\rightarrow N$ is a degree-$d$ branched cover if for each $x\in N$ there is a partition $\mu(x)=[\alpha_{1},\ldots,\alpha_{r}]$ of $d$ such that, over a neighborhood of $x$ in $N$, $f$ is equivalent to the map $\widetilde{f}:\{1,\ldots,r\}\times \mathbb{D}\rightarrow \mathbb{D}$ where $\widetilde{f}(j,z)=z^{\alpha_{j}}$ and $x$ corresponds to $0$ in the disc $\mathbb{D}=\{z| |z|<1\}\subseteq\mathbb{C}$ (here the square brackets are used to denote an unordered set with repetitions). The points $x\in N$ for which $\mu(x)$ is not the trivial partition $[1,1,\ldots,1]$ of $d$ constitute the branching set $B_{f}$ of $f$, and it is a finite set. The collection $\mathcal{D}=\{\mu(x)|x\in B_{f}\}$ (with repetitions allowed) is called the branching datum of $f$. As is well known, the degree $d$, Euler characteristics $\chi(M),\chi(N)$ and the branching datum $\mathcal{D}$ of $f$ should satisfy the Riemann-Hurwitz formula
\begin{equation}\label{RHF}
\nu(\mathcal{D})=d\cdot\chi(N)-\chi(M),
\end{equation}
where $\nu(\mathcal{D})$ denotes the total branching of $f$, which is defined as follows. Let $B_{f}=\{x_{1},\ldots,x_{n}\}\subseteq N$ and $\mathcal{D}=\{[\alpha_{1}^{1},\ldots,\alpha_{1}^{r_{1}}],\ldots, [\alpha_{n}^{1},\ldots,\alpha_{n}^{r_{n}}]\}$ represent the branching set and the branching datum of $f$, respectively. For each $x_{k}$, its pre-image under $f$ consists of a finite number of points $y_{k}^{1},\ldots,y_{k}^{r_{k}}\in M$, and near each $y_{k}^{j}$ the map $f$ is equivalent to  $\widetilde{f}(z)=z^{\alpha_{k}^{j}}$.  The integer $\alpha_{k}^{j}$ is usually referred to as the local degree or multiplicity of $f$ at the point $y_{k}^{j}$. Since $x_{k}$ is a branching point, at least one of the $\alpha_{k}^{j}$'s should be greater than $1$.  The total branching of $f$ is then defined as $\nu(\mathcal{D})=\sum\limits_{k=1}^{n}\sum\limits_{j=1}^{r_{k}}(\alpha_{k}^{j}-1)$. Since
\begin{equation}\label{DC}
\sum_{j=1}^{r_{k}}\alpha_{k}^{j}=d,~\forall k=1,\ldots,n,
\end{equation}
the Riemann-Hurwitz formula (\ref{RHF}) can be expressed as
\begin{equation}\label{RHF-1}
\sum_{k=1}^{n}(d-r_{k})=d\cdot\chi(N)-\chi(M).
\end{equation}

Given two compact Riemann surfaces $M$ and $N$, a pair $(d,\mathcal{D})$ of an integer $d\geq2$  and a collection $\mathcal{D}$ of nontrivial partitions of $d$ is called a candidate branching datum if it satisfies the Riemann-Hurwitz formula. The Hurwitz problem asks that given two compact Riemann surfaces $M$ and $N$ and a candidate branching datum $(d,\mathcal{D})$ whether there exists a degree-$d$ branched cover $f:M\rightarrow N$ with $(d,\mathcal{D})$ as branching datum. In his classical work \cite{Hur91}, Hurwitz reduced the problem to a problem involving partitions realized by suitable permutations in symmetric groups. In \cite{EKS84}, Edmonds, Kulkarni, and Stong proved that all candidate branching datum is realizable when $\chi(N)\leq0$. However, when $N=S^{2}$ the problem becomes much more complex. It is well known that there exist candidate branching datum $(d,\mathcal{D})$ that cannot be realized by a branched cover, hence called exceptional. For example, this is the case for $d=4$ and $\mathcal{D}=\{[3,1],[2,2],[2,2]\}$. Characterizing all of such exceptional candidate branching datum remains an open problem to this day.  In \cite{Zhe06}, Zheng determined all exceptional candidate branching datum with $n=3$ and $d\leq 22$ by computer for the cases where $M=N=S^{2}$ and $M=T^{2},N=S^{2}$. \par

Finding new types of exceptional candidate branching datum is of interest as it may provide insights towards establishing a universal criterion. However, the general pattern of realizable data remains unclear. Various approaches such as dessins d'enfant, Speiser graphs and the monodromy approach have been explored to attack the problem. For more details, see \cite{Bar01, BP23, CH22, EKS84, Ger87, Hus62, KZ96, Med84, Med90, MSS04, OP06, Pak09, P20, PP09, PP12, PP06, PP07, PP08, SX20, Zhe06} and the references cited therein for more results. Particularly, we refer the reader to \cite{CH22, P20, PP08} for a review of available results and techniques. In \cite{EKS84}, a conjecture proposing connections with number-theoretic facts has been put forward, which is supported by strong evidence in \cite{Pak09, PP09}.

\begin{conjecture}[Prime degree conjecture]
 Suppose $(d,\mathcal{D})$ is a candidate branching datum. If $d$ is a prime, then $(d,\mathcal{D})$ is realizable.
\end{conjecture}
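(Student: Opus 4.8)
The plan is to pass from geometry to Hurwitz's group-theoretic reformulation and then to exploit the rigidity of permutation groups of prime degree. Since \cite{EKS84} already realizes every candidate datum with $\chi(N)\le 0$, the conjecture reduces to the case $N=S^2$. There Hurwitz's correspondence says that a candidate datum $(d,\mathcal{D})$ with $\mathcal{D}=\{\mu(x_1),\ldots,\mu(x_n)\}$ is realizable exactly when there exist permutations $\sigma_1,\ldots,\sigma_n\in S_d$ whose cycle types are the prescribed partitions, satisfying $\sigma_1\cdots\sigma_n=\mathrm{id}$ and generating a transitive subgroup of $S_d$; the genus of $M$ is then forced by \eqref{RHF}. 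First I would record this dictionary and thereby reduce the conjecture to the combinatorial claim that such a system of permutations exists whenever $d=p$ is prime.

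Second, I would invoke the classical Burnside--Galois structure theorem: every transitive subgroup $G\le S_p$ is either $2$-transitive (hence primitive) or conjugate into the affine group $\mathrm{AGL}(1,p)=\{x\mapsto ax+b\}$. The two regimes call for different handling. Inside $\mathrm{AGL}(1,p)$ the possible cycle types are severely restricted---each nontrivial element is either a single $p$-cycle (a translation) or fixes one point and splits the remaining $p-1$ into $(p-1)/e$ cycles of equal length $e$ for some divisor $e>1$ of $p-1$---so one can test the prescribed partitions against this short list and construct solutions explicitly when they fit. For the data not accommodated by the solvable case, I would seek a constructive realization inside a suitable $2$-transitive overgroup.

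The main analytic tool I would deploy is the Frobenius formula: the number of tuples $(\sigma_1,\ldots,\sigma_n)$ with $\sigma_k$ in the conjugacy class $C_k$ determined by $\mu(x_k)$ and $\prod_k\sigma_k=\mathrm{id}$ equals
\begin{equation*}
\frac{|C_1|\cdots|C_n|}{d!}\sum_{\chi}\frac{\chi(C_1)\cdots\chi(C_n)}{\chi(1)^{\,n-2}},
\end{equation*}
summed over the irreducible characters $\chi$ of $S_d$. For the three-point case $n=3$ central to this paper this collapses to a single structure constant, and realizability becomes the statement that this sum is positive and that some contributing tuple is transitive. I expect the decisive obstacle to sit precisely here: controlling the signs and magnitudes of the character values uniformly across all admissible prime-degree partitions, and then isolating genuinely transitive tuples from imprimitive or intransitive ones, is exactly where no clean argument is known---this is, after all, why the statement is still a conjecture rather than a theorem. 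The number-theoretic character of the evidence in \cite{Pak09, PP09} suggests that any complete proof must feed in arithmetic information about $p$ beyond the group-theoretic skeleton above, and supplying such input is the crux I would not expect to dispatch by routine estimation.
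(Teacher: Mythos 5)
This statement is labeled a \emph{conjecture} in the paper, and the authors explicitly state that it remains open; the paper contains no proof of it (it only proves the much weaker structural result of \textbf{Theorem} \ref{Main-th1} about rational maps whose second and third partitions are divisible by $r$, and derives exceptional data from that). So there is no ``paper's own proof'' to compare against, and your proposal cannot be judged as an alternative route to an established result.

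As a proof attempt, your text has a genuine and decisive gap, which you yourself identify in the final paragraph: after the (correct and standard) reductions --- to $N=S^2$ via \cite{EKS84}, to the permutation-tuple formulation via Hurwitz \cite{Hur91}, and to $n=3$ partitions via the reduction in \cite{EKS84} --- the problem becomes showing that the relevant class-multiplication structure constant in $S_p$ is positive \emph{and} that at least one contributing triple generates a transitive subgroup. Neither half of that is carried out: the Frobenius character sum is not estimated, the Burnside--Galois dichotomy for transitive subgroups of $S_p$ is invoked but never used to produce a tuple for a general admissible datum, and no arithmetic input about $p$ is actually supplied. What you have written is an accurate survey of why the conjecture is hard, not a proof; the step ``some contributing tuple is transitive'' is precisely the open problem restated. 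If you want to contribute something provable in the spirit of this paper, the realistic targets are partial cases (e.g.\ data where one partition is $[p]$, which forces transitivity for free, or data fitting the $\mathrm{AGL}(1,p)$ cycle-type list you describe), not the full conjecture.
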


In \cite{EKS84}, Edmonds, Kulkarni, and Stong reduced the Prime degree conjecture to the case of candidate branching datum with exactly 3 partitions. Up to now, this conjecture is open. In this paper we investigate the properties of branched covers for the cases where $M=N=S^{2}$. Regarding $S^{2}\cong\overline{\mathbb{C}}$ as the Riemann sphere, by Sto\"{i}low's theorem (see \cite{S28} or \cite{RP19}), the Hurwitz problem reduces to the existence of rational maps on $\overline{\mathbb{C}}$ with the given branching datum. Thus we can use complex analysis techniques to attack the problem. Motivation by the recent work of Song and Xu \cite{SX20}, we use their idea of expressing a rational map as a composition of a rational map and a power map. Our main result is as follows and can be viewed as a generalization of a result in \cite{PP06}(\textbf{Theorem} 1.4 and \textbf{Theorem} 1.5).

\begin{theorem}\label{Main-th1}
  Suppose $f:\overline{\mathbb{C}}\rightarrow \overline{\mathbb{C}}$ is a rational map of degree  $d=rk, r\geq 2, k\geq2$, with 3 branching points with associated branching datum
  $$\{\mu_{1}=[\alpha_{1},\ldots,\alpha_{A}],\mu_{2}=[rx_{1},\ldots,rx_{B}],\mu_{3}=[ry_{1},\ldots,ry_{C}]\}.$$
  Then, up to two M\"{o}bius transformations on $\overline{\mathbb{C}}$, there is a rational map  $g:\overline{\mathbb{C}}\rightarrow \overline{\mathbb{C}}$ of degree $k$ such that $f=g^{r}$, with the property that its branching datum consists of $[x_{1},\ldots,x_{B}]$ and $[y_{1},\ldots,y_{C}]$, together with data arising by splitting the partition of $[\alpha_{1},\ldots,\alpha_{A}]$ into $r$ partitions of $k$.
\end{theorem}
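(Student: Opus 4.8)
The plan is to realise the power map $P_r(w)=w^{r}$ as a factor of $f$, following the Song--Xu idea of writing a rational map as a power map composed with another rational map. First I would use a M\"obius transformation $T$ on the target $\overline{\mathbb{C}}$ to send the branching point carrying $\mu_2$ to $0$, the one carrying $\mu_3$ to $\infty$, and the one carrying $\mu_1$ to a finite nonzero point $q$ (say $q=1$); this target normalization, together with the freedom to reparametrize the source, accounts for the two M\"obius transformations in the statement. Set $h:=T\circ f$. Since M\"obius maps preserve branching data, $h$ has branching datum $[rx_1,\dots,rx_B]$ over $0$, $[ry_1,\dots,ry_C]$ over $\infty$, and $[\alpha_1,\dots,\alpha_A]$ over $q$. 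Throughout, $g^{r}$ denotes the pointwise power $z\mapsto g(z)^{r}=(P_r\circ g)(z)$.

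Next I would extract a global $r$-th root of $h$. Viewing $h$ as a meromorphic function on $\overline{\mathbb{C}}$, its zeros $a_1,\dots,a_B$ have orders $rx_1,\dots,rx_B$ and its poles $b_1,\dots,b_C$ have orders $ry_1,\dots,ry_C$, all divisible by $r$, while away from these points $h$ takes values in $\mathbb{C}^{*}$. Hence the a priori multivalued $h^{1/r}$ has trivial monodromy around each $a_i$ and each $b_j$ (a small loop about $a_i$ makes $h$ wind $rx_i$ times about $0$, so $h^{1/r}$ winds the integer $x_i$ times), and since $\pi_1$ of the punctured sphere is generated by such loops, $h^{1/r}$ is single-valued. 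It therefore defines a rational map $g:\overline{\mathbb{C}}\to\overline{\mathbb{C}}$ with $g^{r}=h$, so that $f=T^{-1}\circ g^{r}$. Equivalently, in divisor language $(h)=r\cdot D$ for a degree-zero divisor $D$ on $\overline{\mathbb{C}}$, which is principal since every degree-zero divisor on the sphere arises from a rational function; taking $g_0$ with $(g_0)=D$ and absorbing the nonzero constant $h/g_0^{r}$ into an $r$-th root produces $g$. Counting zeros gives $\deg g=\sum_i x_i=d/r=k$, as required.

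Finally I would read off the branching datum of $g$. The fibre $g^{-1}(0)=\{a_i\}$ with multiplicities $x_i$ gives datum $[x_1,\dots,x_B]$, and $g^{-1}(\infty)=\{b_j\}$ gives $[y_1,\dots,y_C]$. Over $q$ the power map $P_r$ is unramified, so writing $w_0^{r}=q$ and $\zeta=e^{2\pi i/r}$ we have $h^{-1}(q)=\bigsqcup_{l=0}^{r-1}g^{-1}(\zeta^{l}w_0)$, and the local degrees of $g$ on each fibre $g^{-1}(\zeta^{l}w_0)$ coincide with those of $h$; thus the partition $[\alpha_1,\dots,\alpha_A]$ of $rk$ is distributed among the $r$ fibres, each being a partition of $k$. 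A short argument shows $g$ has no further critical values: a critical value $v\notin\{0,\infty\}$ of $g$ would force $P_r(v)\notin\{0,\infty,q\}$ to be a critical value of $f=P_r\circ g$, contradicting that $f$ has only three branching points. This yields precisely the asserted branching datum of $g$.

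I expect the main obstacle to be the single-valuedness of $h^{1/r}$, that is, converting the purely local divisibility of the ramification indices over $0$ and $\infty$ into a global extraction of the $r$-th root. On $\overline{\mathbb{C}}$ this step is clean because the complement of the zeros and poles has fundamental group generated by small loops (equivalently, every degree-zero divisor is principal), so the hypothesis that each part of $\mu_2$ and $\mu_3$ is divisible by $r$ is exactly what makes the root rational; the subsequent identification of $g$'s branching datum is then bookkeeping.
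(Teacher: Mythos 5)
Your proposal is correct and follows essentially the same route as the paper: normalize by M\"obius transformations so that $\mu_2$ and $\mu_3$ sit over $0$ and $\infty$, extract a global $r$-th root $g$ of $f$ (which exists precisely because all multiplicities over those two fibres are divisible by $r$), and observe that the power map $w\mapsto w^{r}$ is unramified away from $0,\infty$, so the local degrees in the $\mu_1$-fibre are distributed unchanged among the $r$ fibres of $g$ over the $r$-th roots of $q$. The only cosmetic difference is that you obtain $g$ via the divisor/monodromy argument, whereas the paper first moves $\infty$ off the special fibres by a source M\"obius transformation and takes the $r$-th root termwise in an explicit factorization of $f$.
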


\begin{remark}
In \textbf{Theorem} \ref{Main-th1}, the sentence `` up to two M\"{o}bius transformations'' means that $f=\varphi\circ g^{k} \circ\phi$, where $\varphi,\psi:\overline{\mathbb{C}}\rightarrow\overline{\mathbb{C}}$ are two M\"{o}bius transformations.
\end{remark}

\begin{remark}
In \textbf{Theorem} \ref{Main-th1}, if $x_{1}=\ldots=x_{B}=1$ or $y_{1}=\ldots=y_{C}=1$ then the corresponding partition of $k$ should be expunged from the branching datum of $g$.
\end{remark}

Then, under the assumption of \textbf{Theorem} \ref{Main-th1}, one would have immediate corollaries for realizable data.

\begin{corollary}\label{Co-1}
Each $\alpha_{j}\leq k$.
\end{corollary}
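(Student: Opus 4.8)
The plan is to read the bound off directly from the factorization supplied by Theorem \ref{Main-th1}. By that theorem, after composing with two M\"{o}bius transformations we may assume $f = g^{r}$ with $g$ a rational map of degree $k$, where $g^{r}$ denotes the map $z \mapsto g(z)^{r}$; that is, $f = p_{r} \circ g$ with $p_{r}(w) = w^{r}$ the degree-$r$ power map. The branch values of $p_{r}$ are $0$ and $\infty$, and these are precisely the two branch points of $f$ whose partitions $\mu_{2} = [rx_{1},\ldots,rx_{B}]$ and $\mu_{3} = [ry_{1},\ldots,ry_{C}]$ consist of multiples of $r$; the remaining branch point $c_{1}$, carrying $\mu_{1} = [\alpha_{1},\ldots,\alpha_{A}]$, is therefore a regular value of $p_{r}$.

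First I would fix an index $j$ and note that $\alpha_{j}$ is the local degree of $f$ at some point $y_{j}$ in the fiber $f^{-1}(c_{1})$. Because $c_{1} \neq 0,\infty$, its $r$ preimages under $p_{r}$ are distinct and $p_{r}$ is unramified at each of them; hence the local degree of $f = p_{r} \circ g$ at $y_{j}$ coincides with the local degree of $g$ at $y_{j}$. That local degree is one of the parts of the partition of $k$ describing the fiber of $g$ over the relevant $r$-th root of $c_{1}$---which is exactly the statement in Theorem \ref{Main-th1} that $[\alpha_{1},\ldots,\alpha_{A}]$ splits into $r$ partitions of $k$.

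The conclusion is then immediate: a positive part of a partition of $k$ cannot exceed $k$, so $\alpha_{j} \leq k$ for every $j$. Equivalently, and entirely combinatorially, the splitting writes the multiset $\{\alpha_{1},\ldots,\alpha_{A}\}$ as a disjoint union of $r$ sub-multisets each summing to $k$, and any single entry of a multiset of positive integers with total $k$ is at most $k$. There is essentially no obstacle to overcome here: all the substantive geometry is already packaged in Theorem \ref{Main-th1}, and Corollary \ref{Co-1} merely records the numerical constraint that the unramified fiber of the power factor forces on the parts of $\mu_{1}$.
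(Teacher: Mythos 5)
Your proof is correct and follows the same route the paper intends: Corollary \ref{Co-1} is stated there as an immediate consequence of Theorem \ref{Main-th1}, since each $\alpha_{j}$ becomes a part of one of the $r$ partitions of $k$ (equivalently, a local degree of $g$ in a fiber of total degree $k$), hence is at most $k$. Your unpacking of why the local degrees of $f$ and $g$ agree over the regular values of the power map is exactly the content of the paper's Lemma \ref{Le-1}, so nothing is genuinely different.
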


\begin{corollary}\label{Co-2}
If $A=2$, then $r=2,\mu_{1}=[k,k]$.
\end{corollary}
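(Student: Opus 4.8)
The plan is to read the conclusion off directly from \textbf{Theorem} \ref{Main-th1} via a parts-counting argument, so essentially all the analytic work is already done. First I would apply \textbf{Theorem} \ref{Main-th1}: up to two M\"{o}bius transformations, $f=g^{r}$ for a rational map $g$ of degree $k$, whose branching datum consists of $[x_{1},\ldots,x_{B}]$, $[y_{1},\ldots,y_{C}]$, together with the data obtained by splitting $\mu_{1}=[\alpha_{1},\ldots,\alpha_{A}]$ into $r$ partitions of $k$. The structural fact I would extract is that the multiset $[\alpha_{1},\ldots,\alpha_{A}]$ is the disjoint union of $r$ blocks $P_{1},\ldots,P_{r}$, each block being a partition of $k$; this is consistent with $\sum_{j=1}^{A}\alpha_{j}=rk=d$, since $\sum_{i=1}^{r}(\text{sum of }P_{i})=rk$.

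Next I would count parts. Because each $P_{i}$ is a partition of the positive integer $k\geq 2$, it sums to $k>0$ and hence has at least one part, so the total number of parts satisfies $A=\sum_{i=1}^{r}|P_{i}|\geq r$. Combining this with the hypotheses $A=2$ and $r\geq 2$ yields $2=A\geq r\geq 2$, which forces $r=2$.

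Finally I would pin down $\mu_{1}$. With $r=2$, the two partitions $P_{1},P_{2}$ of $k$ together contribute exactly $A=2$ parts, and since each has at least one part, each must have exactly one part. A one-part partition of $k$ is necessarily $[k]$, so $P_{1}=P_{2}=[k]$ and therefore $\mu_{1}=[k,k]$, as claimed.

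The argument is short precisely because \textbf{Theorem} \ref{Main-th1} supplies the decomposition of $\mu_{1}$; there is no genuine analytic obstacle, only the elementary observation that a splitting of $\mu_{1}$ into $r$ summand-partitions of $k$ cannot produce fewer than $r$ parts. The one point I would take care to justify is that the splitting in \textbf{Theorem} \ref{Main-th1} really yields $r$ \emph{nonempty} blocks, each a full partition of $k$, rather than allowing some empty block; this is automatic, since each block sums to $k\geq 2$ and so is forced to be nonempty.
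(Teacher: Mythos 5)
Your argument is correct and is exactly the intended reading of \textbf{Theorem} \ref{Main-th1}: the paper states \textbf{Corollary} \ref{Co-2} as an immediate consequence without written proof, and your parts-count ($A=\sum_{i=1}^{r}|P_{i}|\geq r$, forcing $r=2$ and each block to be the single-part partition $[k]$) is precisely the reasoning being left to the reader. Your care about the blocks being nonempty is also consistent with the paper's proof of \textbf{Theorem} \ref{Main-th1}, where each of the $r$ fibers over a root of $-1$ is a full (hence nonempty) partition of $k$.
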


\begin{corollary}\label{Co-3}
If $A=r$, then $\mu_{1}=[k,\ldots,k]$.
\end{corollary}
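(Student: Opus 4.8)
The plan is to invoke Theorem~\ref{Main-th1} and then finish with a short pigeonhole count on the number of parts. By that theorem, writing $f=g^{r}$ for the degree-$k$ rational map $g$ it produces (after two M\"{o}bius transformations), the partition $\mu_1=[\alpha_1,\ldots,\alpha_A]$ is split into $r$ partitions $\pi_1,\ldots,\pi_r$ of $k$, so that $\mu_1$ is the multiset union $\pi_1\cup\cdots\cup\pi_r$.

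First I would record the part-count identity. Writing $|\pi_i|$ for the number of parts of $\pi_i$, the fact that $\mu_1$ is the multiset union of the $\pi_i$ yields $A=\sum_{i=1}^{r}|\pi_i|$. Next I would observe that each $\pi_i$ is a partition of the positive integer $k$ and so has at least one part, giving $|\pi_i|\geq 1$ for every $i$; geometrically this is just the surjectivity of the nonconstant map $g$.

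The conclusion is then forced. Substituting the hypothesis $A=r$ into $A=\sum_{i=1}^{r}|\pi_i|$ while keeping $|\pi_i|\geq 1$ leaves no slack: each $|\pi_i|$ must equal $1$. Since the only partition of $k$ with a single part is $[k]$, we get $\pi_i=[k]$ for all $i$, whence $\mu_1=\pi_1\cup\cdots\cup\pi_r=[k,\ldots,k]$ with $r$ equal parts. I do not anticipate a genuine obstacle: the argument is pure counting once Theorem~\ref{Main-th1} supplies the splitting, and the only step deserving a word of care is the nonemptiness of each $\pi_i$. The M\"{o}bius transformations are irrelevant here, since branching data are preserved under pre- and post-composition with M\"{o}bius maps.
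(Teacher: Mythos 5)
Your proof is correct and is exactly the argument the paper intends: the corollary is stated as an immediate consequence of Theorem~\ref{Main-th1}, and the pigeonhole count $A=\sum_{i=1}^{r}|\pi_i|$ with $|\pi_i|\geq 1$ (each $\pi_i$ a nonempty partition of $k$, by surjectivity of $g$) is precisely the implicit reasoning. No gaps.
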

As applications of these results, we offer new proofs of some of the results presented in \cite{PP06,Zhu19} and give some new type of exceptional candidate branching datum (\textbf{Propositions} \ref{P-n-1} and \ref{P-n-2} ).

\begin{proposition}[\cite{Zhu19}]\label{Pro-1}
Suppose $d=2k$ with $k\geq2$, then a candidate branching datum of the form
$$(d,\{[k_{1},k_{2}],[\underbrace{2,\ldots,2}_{k}],[\underbrace{2,\ldots,2}_{k}]\})$$
is exceptional if $k_{1}\neq k_{2}$. Suppose in addition $k\geq3$, then a candidate branching datum of the form
$$(d,\{[\underbrace{2,\ldots,2}_{k}],[\underbrace{2,\ldots,2}_{j_{1}},2k-2j_{1}],[\underbrace{2,\ldots,2}_{j_{2}},2k-2j_{2}]\})$$
is exceptional if $j_{1}\neq j_{2}$.
\end{proposition}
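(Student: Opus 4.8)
The plan is to argue by contradiction in both cases, with Theorem~\ref{Main-th1} (taking $r=2$) as the main tool. Suppose a datum of the stated form is realized by a rational map $f:\overline{\mathbb{C}}\rightarrow\overline{\mathbb{C}}$ of degree $2k$. In each case two of the three partitions consist entirely of even parts, and these can be written as $\mu_2=[2x_1,\ldots,2x_B]$ and $\mu_3=[2y_1,\ldots,2y_C]$ (each $x_i,y_i$ being half of the corresponding part), so Theorem~\ref{Main-th1} applies with $r=2$ and yields, up to M\"obius transformations, a factorization $f=g^{2}$ with $g$ a rational map of degree $k$. I would then read off a constraint on the remaining partition $\mu_1$ from Corollaries~\ref{Co-1} and~\ref{Co-2} and contradict the hypothesis $k_1\neq k_2$ (resp.\ $j_1\neq j_2$).

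For the first family I would set $\mu_1=[k_1,k_2]$ and $\mu_2=\mu_3=[\underbrace{2,\ldots,2}_{k}]$. Since $k\geq 2$, Theorem~\ref{Main-th1} applies, and the branching datum of $g$ contains data obtained by splitting $\mu_1$ into $r=2$ partitions of $k$. As $\mu_1$ has exactly $A=2$ parts, Corollary~\ref{Co-2} forces $\mu_1=[k,k]$, i.e.\ $k_1=k_2=k$, contradicting $k_1\neq k_2$. Hence the datum is exceptional.

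For the second family (with $k\geq 3$) every part of all three partitions is even, so each partition is eligible to play the role of $\mu_1$ with $r=2$. First I would record that, being a candidate datum, it satisfies the Riemann--Hurwitz relation~(\ref{RHF}), which here reduces to $j_1+j_2=k$. Next I apply Theorem~\ref{Main-th1} twice. Choosing $\mu_1=[\underbrace{2,\ldots,2}_{j_1},2k-2j_1]$ (with the other two partitions as $\mu_2,\mu_3$), Corollary~\ref{Co-1} gives that every part of $\mu_1$ is at most $k$; in particular $2k-2j_1\leq k$, so $j_1\geq k/2$. The symmetric choice $\mu_1=[\underbrace{2,\ldots,2}_{j_2},2k-2j_2]$ yields $j_2\geq k/2$. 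Combined with $j_1+j_2=k$ this forces $j_1=j_2=k/2$, contradicting $j_1\neq j_2$; so this datum is exceptional too.

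The conceptual content is entirely in Theorem~\ref{Main-th1} and its corollaries, so I do not anticipate a genuine obstacle beyond the bookkeeping: one must check in each case that the partitions designated $\mu_2,\mu_3$ really have all parts divisible by $2$ (so that the theorem applies with $r=2$), and exploit the fact that the branching datum is an unordered set to relabel it so that the ``long'' part $2k-2j_i$ sits inside the free partition $\mu_1$, where Corollary~\ref{Co-1} yields the contradiction. The only arithmetic input is the elementary Riemann--Hurwitz count $j_1+j_2=k$.
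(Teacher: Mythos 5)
Your proposal is correct and follows essentially the same route as the paper: Corollary~\ref{Co-2} (with $A=2$, $r=2$) disposes of the first family, and the Riemann--Hurwitz count $j_1+j_2=k$ combined with Corollary~\ref{Co-1} disposes of the second. The only cosmetic difference is that you apply Corollary~\ref{Co-1} symmetrically to both partitions to force $j_1=j_2=k/2$, whereas the paper applies it once to whichever partition contains the part $2k-2j_i>k$; the underlying argument is identical.
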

\begin{proof}
Since $k_{1}\neq k_{2}$, so by \textbf{Corollary} \ref{Co-2}, $(d,\mathcal{D})$ is exceptional.
If $k\geq3$ , since $j_{1}+j_{2}=k$ and $j_{1}\neq j_{2}$, then $2j_{1}<k$ or $2j_{2}<k$. If $2j_{1}<k$, then $2k-2j_{1}>k$; If $2j_{2}<k$, then $2k-2j_{2}>k$, so by \textbf{Corollary} \ref{Co-1}, $(d,\mathcal{D})$ is exceptional.

\end{proof}

\begin{proposition}[\cite{Zhu19}]
Suppose $d=3k$ with $k\geq 3$,
then a candidate branching datum of the form
$$(d,\{[k-2,\underbrace{2,\ldots,2}_{k+1}],[\underbrace{3,\ldots,3}_{k}],[\underbrace{3,\ldots,3}_{k}]\})$$
is exceptional if $k$ is odd.
\end{proposition}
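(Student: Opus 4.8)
The plan is to apply \textbf{Theorem} \ref{Main-th1} with $r=3$ and then reduce the realizability question to an elementary parity obstruction.

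First I would verify that the datum matches the template of \textbf{Theorem} \ref{Main-th1}. Writing $d=3k$ and taking $r=3$, the partitions $\mu_2=\mu_3=[\underbrace{3,\ldots,3}_{k}]$ have the form $[3x_1,\ldots,3x_k]$ and $[3y_1,\ldots,3y_k]$ with every $x_i=y_j=1$, while $\mu_1=[k-2,\underbrace{2,\ldots,2}_{k+1}]$ plays the role of $[\alpha_1,\ldots,\alpha_A]$ with $A=k+2$. A quick check gives $(k-2)+2(k+1)=3k$ and a total branching $2(k-1)+2k+2k=6k-2=2d-2$, so $(d,\mathcal{D})$ is indeed a candidate branching datum and \textbf{Theorem} \ref{Main-th1} applies.

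Next, I would argue by contradiction. Suppose $(d,\mathcal{D})$ were realized by a rational map $f$. By \textbf{Theorem} \ref{Main-th1}, up to two M\"{o}bius transformations there would exist a degree-$k$ rational map $g$ with $f=g^{3}$ whose branching datum consists of $[x_1,\ldots,x_k]$, $[y_1,\ldots,y_k]$, and three partitions of $k$ obtained by splitting $\mu_1$. Since all the $x_i$ and $y_j$ equal $1$, the remark following \textbf{Theorem} \ref{Main-th1} expunges those two trivial blocks, so the branching datum of $g$ is exactly a splitting of the multiset $\{k-2,\underbrace{2,\ldots,2}_{k+1}\}$ into three groups, each summing to $k$.

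The crux is the combinatorics of this splitting, where the parity of $k$ becomes decisive. The single part $k-2$ lies in exactly one group; as $0<k-2<k$ (here the hypothesis $k\geq 3$ ensures $k-2\geq 1$), that group needs additional parts summing to $k-(k-2)=2$, which forces it to be $\{k-2,2\}$ and uses up one of the $2$'s. The remaining $k$ copies of $2$ must then fill the other two groups, each required to sum to $k$; but a sum of $2$'s is always even whereas $k$ is odd, so neither group can be completed. Hence no admissible splitting exists, so $g$ cannot exist, and therefore neither can $f$: the datum is exceptional. I do not anticipate any real difficulty beyond this point---once \textbf{Theorem} \ref{Main-th1} delivers the factorization, the argument collapses to the parity observation above, the only subtlety being the correct disposal of the two trivial blocks over the power-map branch points.
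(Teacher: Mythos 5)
Your proposal is correct and follows the paper's own route: invoke \textbf{Theorem} \ref{Main-th1} to reduce realizability to splitting $[k-2,\underbrace{2,\ldots,2}_{k+1}]$ into three partitions of $k$, then kill the splitting by parity (the paper phrases it as ``only one odd part, but three odd sums are needed,'' while you trace the single part $k-2$ to its block; these are the same obstruction). Your verification of the Riemann--Hurwitz count and the handling of the two trivial blocks are fine additions but not a different method.
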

\begin{proof}
Since $k$ is odd and $2$ is even, there is just one odd in $[k-2,2,\ldots,2]$, so $[k-2,2,\ldots,2]$ cannot be split into 3 partitions of $k$. Therefor, by \textbf{Theorem} \ref{Main-th1}, $(d,\mathcal{D})$ is exceptional.
\end{proof}

\begin{proposition}[\cite{Zhu19}]
Suppose $d=rk$ with $r\geq2,k\geq2$, then a candidate branching datum of the form
 $$(d, \{[j_{1},j_{2},\underbrace{1,\ldots,1}_{(r-2)k}],[\underbrace{r,\ldots,r}_{k}],[\underbrace{r,\ldots,r}_{k}]\})$$
is exceptional if $j_{1}\neq j_{2}$.
\end{proposition}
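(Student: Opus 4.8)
The plan is to apply \textbf{Theorem} \ref{Main-th1} directly with the structural constraint furnished by \textbf{Corollary} \ref{Co-2}. The branching datum here has the form $\{[j_1,j_2,\underbrace{1,\ldots,1}_{(r-2)k}],[\underbrace{r,\ldots,r}_{k}],[\underbrace{r,\ldots,r}_{k}]\}$, so in the notation of the theorem I would set $\mu_1=[j_1,j_2,1,\ldots,1]$ (with $(r-2)k$ trailing ones), and both $\mu_2$ and $\mu_3$ equal to $[\underbrace{r,\ldots,r}_{k}]$. The second and third partitions are exactly of the shape $[rx_1,\ldots,rx_B]$ and $[ry_1,\ldots,ry_C]$ required by the hypothesis, with all $x_i=y_i=1$ and $B=C=k$; this is what makes the splitting machinery of \textbf{Theorem} \ref{Main-th1} applicable.

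The key step is to reason by contradiction: suppose the datum were realizable by a rational map $f$ of degree $d=rk$. Then \textbf{Theorem} \ref{Main-th1} provides a degree-$k$ rational map $g$ with $f=g^{r}$ (up to two M\"{o}bius transformations), whose branching datum is obtained by splitting $\mu_1=[j_1,j_2,1,\ldots,1]$ into $r$ partitions of $k$. In particular, each of the entries of $\mu_1$ must be distributed among $r$ blocks, each summing to $k$. Here I would invoke \textbf{Corollary} \ref{Co-2}: its hypothesis $A=2$ does not immediately apply because $A=2+(r-2)k$, so instead I would argue directly from the splitting requirement, which is the cleanest route.

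The crucial observation is that splitting $[j_1,j_2,1,\ldots,1]$ into $r$ partitions of $k$ forces a parity/counting constraint on $j_1$ and $j_2$. Since there are $r$ blocks each summing to $k$ and the total is $j_1+j_2+(r-2)k=rk$, we have $j_1+j_2=2k$. I would examine the block(s) containing $j_1$ and $j_2$. Because each block sums to exactly $k$ and every part other than $j_1,j_2$ equals $1$, the parts $j_1$ and $j_2$ cannot lie in the same block (that would force $j_1+j_2\le k$, contradicting $j_1+j_2=2k$ with $j_1,j_2\ge 1$); hence they lie in distinct blocks. The block containing $j_1$ then satisfies $j_1+(\text{number of ones in that block})=k$, so $j_1\le k$, and symmetrically $j_2\le k$. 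Combined with $j_1+j_2=2k$, this forces $j_1=j_2=k$, contradicting the assumption $j_1\neq j_2$. I expect the main obstacle to be the careful bookkeeping ensuring that the ones are available in the right quantity to complete each block to size $k$; but since there are precisely $(r-2)k+2$ parts distributed into $r$ blocks and the two large parts occupy separate blocks, this counting closes without difficulty. Therefore no such $g$ exists, so by \textbf{Theorem} \ref{Main-th1} the datum $(d,\mathcal{D})$ is exceptional whenever $j_1\neq j_2$.
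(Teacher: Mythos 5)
Your proof is correct and follows essentially the same route as the paper: the paper simply notes that $j_1+j_2=2k$ and $j_1\neq j_2$ force one of $j_1,j_2$ to exceed $k$, contradicting \textbf{Corollary} \ref{Co-1}. Your block-by-block analysis of the splitting is just an inline re-derivation of that corollary, so the two arguments coincide in substance.
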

\begin{proof}
Since $j_{1}+j_{2}=2k$  and $j_{1}\neq j_{2}$, then $j_{1}>k$ or $j_{2}>k$, so by \textbf{Corollary} \ref{Co-1}, $(d,\mathcal{D})$ is exceptional.
\end{proof}

\begin{proposition}\label{P-n-1}
Suppose $d=3k$ with $k\geq 3$ and $k\equiv 1~mod~2$ , then a candidate branching datum of the form
$$(d,\{[j_{1},j_{2},\underbrace{2,\ldots,2}_{k}],[\underbrace{3,\ldots,3}_{k}],[\underbrace{3,\ldots,3}_{k}]\}),$$
is exceptional if $j_{1},j_{2}$ have different parity.
\end{proposition}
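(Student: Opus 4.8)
The plan is to recognize this datum as an instance of the setup of \textbf{Theorem} \ref{Main-th1} and then derive a contradiction by a parity count, in the same spirit as the three preceding propositions. First I would match the notation: taking $r=3$ and keeping $k$ as the common factor (so that $d=rk=3k$), the two partitions $[\underbrace{3,\ldots,3}_{k}]$ are precisely $\mu_{2}=[rx_{1},\ldots,rx_{B}]$ and $\mu_{3}=[ry_{1},\ldots,ry_{C}]$ with $B=C=k$ and every $x_{i}=y_{i}=1$, while $\mu_{1}=[j_{1},j_{2},\underbrace{2,\ldots,2}_{k}]$ plays the role of $[\alpha_{1},\ldots,\alpha_{A}]$ with $A=k+2$. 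I would then argue by contradiction: if the datum were realizable by a rational map $f$, then \textbf{Theorem} \ref{Main-th1} applies, so up to two M\"{o}bius transformations $f=g^{3}$ for a degree-$k$ rational map $g$; and since all $x_{i}=y_{i}=1$, the remark following \textbf{Theorem} \ref{Main-th1} about expunging trivial partitions shows that the branching datum of $g$ is obtained \emph{solely} by splitting $\mu_{1}$ into three partitions of $k$. Thus realizability forces $[j_{1},j_{2},\underbrace{2,\ldots,2}_{k}]$ to split into three sub-partitions each summing to $k$.

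The key step is the parity obstruction. The Riemann-Hurwitz relation (\ref{RHF-1}) gives $j_{1}+j_{2}=k$, so under the hypothesis that $j_{1},j_{2}$ have different parity, exactly one of them is odd, and all remaining parts of $\mu_{1}$ equal $2$ and are even; hence $\mu_{1}$ contains exactly one odd part. In any splitting of $\mu_{1}$ into three partitions of the odd number $k$, each sub-partition must have odd sum and therefore must contain an odd number, in particular at least one, of odd parts. But only one odd part is available in $\mu_{1}$ in total, so at most one of the three sub-partitions can have odd sum, which contradicts the requirement that all three sum to the odd number $k$. Consequently no such splitting exists, and by \textbf{Theorem} \ref{Main-th1} the datum is exceptional.

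I expect the only delicate point to be the bookkeeping in the first step, namely checking that $\mu_{2},\mu_{3}$ genuinely have the form $[rx_{1},\ldots],[ry_{1},\ldots]$ demanded by \textbf{Theorem} \ref{Main-th1} and that the associated trivial partitions of $k$ drop out through the expunging remark; the parity count itself is elementary and runs exactly parallel to the proof of the earlier proposition with first partition $[k-2,\underbrace{2,\ldots,2}_{k+1}]$. (I also note that, because $j_{1}+j_{2}=k$ is forced to be odd when $k$ is odd, the parity hypothesis on $j_{1},j_{2}$ is in fact automatic here, so the conclusion holds for every candidate datum of this shape with $k$ odd.)
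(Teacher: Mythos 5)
Your proof is correct and follows essentially the same route as the paper: invoke \textbf{Theorem} \ref{Main-th1} to force a splitting of $\mu_{1}$ into three partitions of the odd number $k$, then observe that $\mu_{1}$ contains only one odd part while each of the three sub-partitions would need at least one. Your added remark that the parity hypothesis is automatic (since $j_{1}+j_{2}=k$ is odd) is accurate, though note that $j_{1}+j_{2}=k$ comes from each partition summing to $d$ (equation (\ref{DC})) rather than from the Riemann--Hurwitz relation (\ref{RHF-1}).
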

\begin{proof}
Since $j_{1},j_{2}$ have different parity, then there is just one odd in the partition $[j_{1},j_{2},\underbrace{2,\ldots,2}_{k}]$. Since $k$ is odd, then $[k-2,2,\ldots,2]$ cannot be split into 3 partitions of $k$, so by \textbf{Theorem} \ref{Main-th1}, $(d,\mathcal{D})$ is exceptional.
\end{proof}

\begin{proposition}\label{P-n-2}
Suppose $d=3k$ with $k=2+3l, l\geq1$,  then a candidate branching datum of the form
$$(d,\{[\underbrace{3,\ldots,3}_{k-1},\underbrace{1,1,1}_{3}],[\underbrace{3,\ldots,3}_{k}],[\underbrace{3,\ldots,3}_{k}]\}$$
is exceptional.
\end{proposition}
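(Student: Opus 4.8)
The plan is to invoke \textbf{Theorem} \ref{Main-th1} with $r = 3$. Writing $d = 3k = r\cdot(d/r)$ with $d/r = k$, observe that the two partitions $\mu_{2} = \mu_{3} = [\underbrace{3,\ldots,3}_{k}]$ have every part divisible by $r = 3$; in the notation of the theorem this yields $x_{1} = \cdots = x_{k} = y_{1} = \cdots = y_{k} = 1$, which are trivial and hence expunged by the second \textbf{Remark}. Thus, were the datum realizable by a rational map $f$, the theorem would furnish a degree-$k$ rational map $g$ with $f = g^{3}$, and the sole surviving requirement on the branching datum of $g$ would be that the remaining partition $\mu_{1} = [\underbrace{3,\ldots,3}_{k-1},1,1,1]$ can be split into $3$ partitions of $k$; equivalently, the parts of $\mu_{1}$ can be distributed into three groups, each summing to $k$.

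The key step is to rule out such a splitting by a residue argument modulo $3$. Each group summing to $k$ is built from some number $a_{i}$ of $3$'s and some number $b_{i}$ of $1$'s, so that $3a_{i} + b_{i} = k$. Reducing modulo $3$ and using $k \equiv 2 \pmod{3}$ (which holds since $k = 2 + 3l$), we obtain $b_{i} \equiv 2 \pmod{3}$ for each of the three groups. But the total number of $1$'s in $\mu_{1}$ is exactly $3$, so $0 \le b_{i} \le 3$, and the only value in this range congruent to $2$ modulo $3$ is $b_{i} = 2$. Hence each group would require two $1$'s, for a total of $6$, contradicting the fact that only $3$ ones are available. Therefore $\mu_{1}$ cannot be split into $3$ partitions of $k$, no such $g$ exists, and by \textbf{Theorem} \ref{Main-th1} the datum admits no rational map realization, i.e. it is exceptional.

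Since the argument is short, I expect the only real care to lie in justifying the setup rather than in the core computation. First, $r = 3$ is not a free choice but is \emph{forced}: as every part of $\mu_{2}$ and $\mu_{3}$ equals $3$, the hypothesis of \textbf{Theorem} \ref{Main-th1} requires $r \mid 3$ with $r \ge 2$, leaving only $r = 3$; one should also record that the theorem's degree parameter $d/r = k \ge 5$ satisfies $k \ge 2$. Second, it is reassuring to confirm that the datum is a genuine candidate datum, which follows from a direct Riemann--Hurwitz check: $\nu(\mathcal{D}) = (2k-2) + 2k + 2k = 6k - 2 = 2d - 2$. With these two points in place the modular obstruction of the second paragraph completes the proof.
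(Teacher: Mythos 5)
Your proposal is correct and follows the same route as the paper: apply \textbf{Theorem} \ref{Main-th1} with $r=3$ and show that $[\underbrace{3,\ldots,3}_{k-1},1,1,1]$ cannot be split into three partitions of $k$. The paper merely asserts this impossibility from $k\equiv 2 \pmod 3$ and the presence of only three $1$'s, whereas you spell out the underlying counting ($3a_i+b_i=k$ forces $b_i=2$ for each group, needing six $1$'s); this is exactly the intended argument, just made explicit.
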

\begin{proof}
Since $k=2+3l\geq 5$ and the number of 1's in $[\underbrace{3,\ldots,3}_{k-1},\underbrace{1,1,1}_{3}]$ equals 3, then $[\underbrace{3,\ldots,3}_{k-1},\underbrace{1,1,1}_{3}]$ cannot be split into 3 partitions of $k$, so by \textbf{Theorem} \ref{Main-th1}, $(d,\mathcal{D})$ is exceptional.
\end{proof}

From the constructed exceptional data mentioned above, we obtain an additional result regarding non-prime degrees on the case $M=N=S^{2}$, which has been proven in \cite{EKS84,Zhu19} using a different methodology.

\begin{corollary}[\cite{EKS84},\cite{Zhu19}]
For every non-prime $d$, there exists at least one candidate branching data $(d,\mathcal{D})$ that is exceptional.
\end{corollary}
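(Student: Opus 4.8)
The plan is to reduce the statement to a single explicit template that can be instantiated for every composite degree. A non-prime integer $d\geq 2$ is necessarily composite, so it admits a factorization $d=rk$ with $r\geq 2$ and $k\geq 2$. For each such factorization I would exhibit one candidate branching datum which is forced to be exceptional by the corollaries of \textbf{Theorem} \ref{Main-th1}, and this will furnish the desired datum for every non-prime $d$.

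Concretely, I would set $\mu_{2}=\mu_{3}=[\underbrace{r,\ldots,r}_{k}]$, each a partition of $rk=d$ into $k$ parts equal to $r$, and take $\mu_{1}=[k+1,k-1,\underbrace{1,\ldots,1}_{(r-2)k}]$. First I would verify that $(d,\{\mu_{1},\mu_{2},\mu_{3}\})$ is a genuine candidate branching datum. The entries of $\mu_{1}$ sum to $(k+1)+(k-1)+(r-2)k=rk=d$; since $k\geq 2$ we have $k-1\geq 1$ and $k+1>1$, so each $\mu_{i}$ is a nontrivial partition of $d$. A direct substitution into the Riemann--Hurwitz formula (\ref{RHF-1}) with $\chi(M)=\chi(N)=2$ gives $\sum_{i=1}^{3}(d-r_{i})=(2k-2)+(rk-k)+(rk-k)=2d-2$, which is exactly $d\cdot\chi(N)-\chi(M)$, confirming the candidate condition.

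With the datum in hand, exceptionality is immediate from the machinery. Writing $\mu_{2},\mu_{3}$ in the form $[rx_{1},\ldots,rx_{B}]$ and $[ry_{1},\ldots,ry_{C}]$ with all $x_{i}=y_{i}=1$, the datum has precisely the shape required by \textbf{Theorem} \ref{Main-th1}. Since the part $k+1$ of $\mu_{1}$ exceeds $k$, \textbf{Corollary} \ref{Co-1} shows that no rational map $f:\overline{\mathbb{C}}\rightarrow\overline{\mathbb{C}}$ can realize this datum; that is, it is exceptional. As this construction succeeds for every factorization $d=rk$ with $r,k\geq 2$, it produces an exceptional datum for each non-prime $d$. (It is worth noting that the smallest case $d=4$, $r=k=2$ recovers exactly the classical example $\{[3,1],[2,2],[2,2]\}$ mentioned in the introduction.)

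There is essentially no obstacle once \textbf{Theorem} \ref{Main-th1} and its corollaries are available; the only point requiring care is the bookkeeping in the boundary case $r=2$, where the block of $1$'s in $\mu_{1}$ is empty and $\mu_{1}=[k+1,k-1]$ has length $A=2$. Here one should confirm that the construction still yields a valid two-part partition and that exceptionality follows either from \textbf{Corollary} \ref{Co-1} (via $k+1>k$) or, equivalently, from \textbf{Corollary} \ref{Co-2} (since $\mu_{1}\neq[k,k]$). Routing the general argument through \textbf{Corollary} \ref{Co-1} has the advantage that a single universal template handles all composite degrees uniformly, which is exactly what the statement demands.
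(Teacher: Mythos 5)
Your proposal is correct and follows essentially the same route as the paper: the paper derives this corollary from the exceptional data constructed in the preceding propositions, in particular the family $(d,\{[j_{1},j_{2},\underbrace{1,\ldots,1}_{(r-2)k}],[\underbrace{r,\ldots,r}_{k}],[\underbrace{r,\ldots,r}_{k}]\})$ with $j_{1}\neq j_{2}$, whose exceptionality is proved exactly as you argue, via \textbf{Corollary} \ref{Co-1}. Your template is the instance $j_{1}=k+1$, $j_{2}=k-1$ of that proposition, and your Riemann--Hurwitz verification and the $d=4$ sanity check are correct.
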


\section{Proof of \textbf{Theorem} \ref{Main-th1}}
Firstly, we prove the following lemma.
\begin{lemma}\label{Le-1}
Suppose $g:\mathbb{C} \rightarrow \mathbb{C}$ is a non-constant holomorphic function. For a point  $z_{0}\in \mathbb{C}$, if $g(z_{0})\neq0$, then $g$ and $g^{r}$, for every integer $r$, have the same local degree at $z_{0}$.
\end{lemma}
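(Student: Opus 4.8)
The plan is to read off both local degrees from the power-series expansion of $g$ at $z_{0}$. Recall that the local degree of a non-constant holomorphic map $h$ at $z_{0}$ is the order of vanishing of $h(z)-h(z_{0})$ there, i.e.\ the unique integer $m\geq 1$ with $h(z)-h(z_{0})=(z-z_{0})^{m}u(z)$, where $u$ is holomorphic and $u(z_{0})\neq 0$. Accordingly, I would first set $a=g(z_{0})$ and let $m$ be the local degree of $g$ at $z_{0}$, writing $g(z)=a+c\,(z-z_{0})^{m}+O((z-z_{0})^{m+1})$ with $c\neq 0$.

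The second step is to expand $g^{r}(z)=(g(z))^{r}$. Putting $w=g(z)-a=c\,(z-z_{0})^{m}+\cdots$ and applying the binomial theorem, one gets $g^{r}(z)-a^{r}=(a+w)^{r}-a^{r}=r\,a^{r-1}w+\binom{r}{2}a^{r-2}w^{2}+\cdots$, so the only contribution of order $m$ is $r\,a^{r-1}c\,(z-z_{0})^{m}$, while every later term has order at least $2m>m$. Since $a=g(z_{0})\neq 0$ and $r\neq 0$, the coefficient $r\,a^{r-1}c$ is nonzero, and hence $g^{r}(z)-g^{r}(z_{0})$ vanishes to order exactly $m$ at $z_{0}$. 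This shows that the local degree of $g^{r}$ equals $m$, the local degree of $g$. Equivalently, and more conceptually, one may write $g^{r}=p\circ g$ with $p(w)=w^{r}$; since $p'(a)=r\,a^{r-1}\neq 0$, the power map is a local biholomorphism near $g(z_{0})$, so it has local degree $1$ there, and by multiplicativity of the local degree under composition it leaves the local degree of $g$ unchanged. This composition viewpoint has the advantage of handling every nonzero integer $r$ (including negative $r$, for which $g^{r}=1/g^{|r|}$ is defined near $z_{0}$ because $g(z_{0})\neq 0$) in one stroke.

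There is no genuine obstacle in the argument; the whole content of the lemma lies in the role of the hypothesis $g(z_{0})\neq 0$, which I would emphasize. If $a=0$, the linear term $r\,a^{r-1}w$ drops out and the leading contribution comes instead from $w^{r}=c^{r}(z-z_{0})^{mr}+\cdots$, so $g^{r}$ would vanish to order $mr$ and the local degree would jump from $m$ to $mr$. Thus the non-vanishing of $g$ at $z_{0}$ is precisely what prevents the power map from introducing extra ramification, which is exactly the property needed when passing between $f$ and its $r$-th root $g$ in the proof of \textbf{Theorem} \ref{Main-th1}.
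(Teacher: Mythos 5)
Your argument is correct and follows essentially the same route as the paper: expand $g$ at $z_{0}$ to order $m$ and observe that, because $a=g(z_{0})\neq 0$, the order-$m$ coefficient of $g^{r}$ is $r\,a^{r-1}c\neq 0$ (your coefficient is in fact the accurate one; the paper writes $rab$ where $ra^{r-1}b$ is meant). Your added remark that $g^{r}=p\circ g$ with $p(w)=w^{r}$ a local biholomorphism near $a$ is a clean conceptual restatement, and your explicit treatment of negative $r$ goes slightly beyond what the paper records, but the substance is the same.
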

\begin{proof}
Call $n$ the local degree of $g$ at $z_{0}$. Then, knowing that $g(z_{0})\neq0$, the order-$n$ Taylor expansion of $g$ at $z_{0}$ has the form
$$g(z)=a+b (z-z_{0})^{n}+\textrm{o}((z-z_{0})^{n})$$
with $a,b\neq0$. It follows that
$$g^{r}(z)=a^{r}+rab (z-z_{0})^{n}+\textrm{o}((z-z_{0})^{n}),$$
and $rab\neq0$, so $g^{r}$ also has local degree $n$ at $z_{0}$.
\end{proof}

Now the proof of \textbf{Theorem} \ref{Main-th1} is as follows.\par
Since any 3 points in $\overline{\mathbb{C}}$ can be transfer to any 3 points in $\overline{\mathbb{C}}$ by using a M\"{o}bius transformation, then we can use a M\"{o}bius transformation acting on the target $\overline{\mathbb{C}}$ such that the 3 branching points of $f$ are $0,-1,\infty$, and use a M\"{o}bius transformation acting the source $\overline{\mathbb{C}}$ such that $\infty\overline{\in}f^{-1}(0)\cup f^{-1}(-1)\cup f^{-1}(\infty)$. Thus we can suppose the expression of $f$ is
$$f(z)=\frac{(z-z_{1})^{rx_{1}}(z-z_{2})^{rx_{2}}\cdots (z-z_{B})^{rx_{B}}}{(z-w_{1})^{ry_{1}}(z-w_{2})^{ry_{2}}\cdots (z-w_{C})^{ry_{C}}},$$
where $z_{1},\ldots,z_{B},w_{1},\ldots,w_{C}\in\mathbb{C}$ are distinct complex numbers.\par
Set $g(z)=\frac{(z-z_{1})^{x_{1}}(z-z_{2})^{x_{2}}\cdots (z-z_{B})^{x_{B}}}{(z-w_{1})^{y_{1}}(z-w_{2})^{y_{2}}\cdots (z-w_{C})^{y_{C}}}$. Then $g$ is a rational map with degree-$k$ on $\overline{\mathbb{C}}$. It is obvious that $f=g^{r}$, $0$ is a branching point of $g$ if and only if $x_{j}\geq2$ for some $j$ and $\infty$ is a branching point of $g$ if and only if $y_{l}\geq2$ for some $l$. \par

Suppose $z_{0}\in\mathbb{C}\setminus\{z_{1},\ldots,z_{B},w_{1},\ldots,w_{C}\}$, then $g(z_{0})\neq0$, so by \textbf{Lemma} \ref{Le-1}, $g$ and $f=g^{r}$ have the same local degree at $z_{0}$. Knowing that the degree of $g$ is k, thus the branching datum of $g$ is
$$\{[\alpha_{1}^{1},\ldots,\alpha_{1}^{l_{1}}],\ldots,[\alpha_{s}^{1},\ldots,\alpha_{s}^{l_{s}}],[x_{1},\ldots,x_{B}], [y_{1},\ldots,y_{C}]\},$$
for some $1\leq s\leq r$, where $\alpha_{1}^{1},\ldots,\alpha_{1}^{l_{1}},\ldots,\alpha_{s}^{1},\ldots,\alpha_{s}^{l_{s}}$ belong to $\alpha_{1},\ldots,\alpha_{A}$ which means that up to a permutation $(\alpha_{1}^{1},\ldots,\alpha_{1}^{l_{1}},\ldots,\alpha_{s}^{1},\ldots,\alpha_{s}^{l_{s}},1,\ldots,1)=(\alpha_{1},\ldots,\alpha_{A})$.

We note that by \textbf{Lemma} \ref{Le-1}, the proof of the following theorem is easy.
\begin{theorem}\label{Main-th5}
Suppose $g:\overline{\mathbb{C}}\rightarrow\overline{\mathbb{C}}$ is a rational map with degree $d\geq 3$ and branching datum
$$\{[\alpha_{1},\ldots,\alpha_{A}],[\beta_{1},\ldots,\beta_{B}],[\gamma_{1},\ldots,\gamma_{C}]\},$$
then, for any integer $k\geq2$, there is a M\"{o}bius transformation $\varphi$ such that $f=(\varphi\circ g)^{k}$ is a rational map with degree $kd$ and branching datum
  $$\{[\alpha_{1},\ldots,\alpha_{A},\underbrace{1,\ldots,1}_{(k-1)d}],[k\beta_{1},\ldots,k\beta_{B}],[k\gamma_{1},\ldots,k\gamma_{C}]\}.$$
\end{theorem}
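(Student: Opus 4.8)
The plan is to reverse-engineer the argument used to prove \textbf{Theorem} \ref{Main-th1}, since the present statement is essentially its converse: there we decomposed a given $f$ of degree $rk$ as $g^r$, whereas here we are handed $g$ and wish to build $f=(\varphi\circ g)^k$ of degree $kd$ with a controlled branching datum. First I would normalize by a M\"obius transformation on the target $\overline{\mathbb{C}}$ so that the three branching values of $g$ sit at the convenient triple $0,-1,\infty$, and by one on the source so that $\infty$ lies over none of these three points. This lets me write $g$ explicitly in the rational form
$$g(z)=\frac{(z-z_{1})^{\beta_{1}}\cdots(z-z_{B})^{\beta_{B}}}{(z-w_{1})^{\gamma_{1}}\cdots(z-w_{C})^{\gamma_{C}}},$$
so that the partition $[\beta_1,\dots,\beta_B]$ comes from the zeros (the branching over $0$), the partition $[\gamma_1,\dots,\gamma_C]$ from the poles (over $\infty$), and $[\alpha_1,\dots,\alpha_A]$ from the fiber over $-1$, i.e.\ the solutions of $g(z)=-1$.

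Next I would simply raise to the $k$-th power, defining $\varphi$ to be the normalizing M\"obius transformation above (so that $\varphi\circ g$ has its branch values at $0,-1,\infty$) and setting $f=(\varphi\circ g)^k$. The zero and pole multiplicities of $g$ get multiplied by $k$ under the power map, which immediately yields the partitions $[k\beta_1,\dots,k\beta_B]$ over $0$ and $[k\gamma_1,\dots,k\gamma_C]$ over $\infty$; this is a direct order-of-vanishing computation. The genuinely substantive point is the fiber over the third branch value: I must show that over $0$ (the image of $-1$ under $w\mapsto w^k$) the local degrees coming from the $\alpha_j$ are unchanged, while $(k-1)d$ new simple preimages of local degree $1$ appear. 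This is exactly where \textbf{Lemma} \ref{Le-1} does the work. At a point $z_0$ with $(\varphi\circ g)(z_0)=-1\neq 0$, Lemma \ref{Le-1} guarantees that $\varphi\circ g$ and $f=(\varphi\circ g)^k$ have the same local degree, so each $\alpha_j$ survives intact into the branching datum of $f$.

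The remaining $(k-1)d$ entries of $1$ I would account for by a degree/counting argument rather than a pointwise one: since $f$ has degree $kd$, every fiber of $f$ has total local degree $kd$ by \eqref{DC}, and the fiber of $f$ over the point corresponding to $-1$ is the union over the $k$-th roots of $-1$ of the fibers of $\varphi\circ g$. The root whose preimages coincide with the $z$ solving $(\varphi\circ g)(z)=-1$ contributes the partition $[\alpha_1,\dots,\alpha_A]$ with total mass $d$; the other $k-1$ roots each give a fiber of $\varphi\circ g$ over a generic value, hence $d$ distinct simple preimages, contributing $(k-1)d$ ones. I expect the only delicate step to be confirming that these $k-1$ auxiliary fibers are genuinely unramified, which follows because the three branch values of $\varphi\circ g$ are precisely $0,-1,\infty$, and the remaining $(k-1)$ roots of $-1$ are none of these three; thus those fibers avoid the branch locus of $\varphi\circ g$ entirely and consist of $d$ simple points each. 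Assembling the three fibers gives exactly the claimed datum, completing the proof.
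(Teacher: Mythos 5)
Your argument is correct and matches the paper's approach: the paper omits the proof of this theorem, remarking only that it follows easily from Lemma \ref{Le-1}, and your write-up supplies exactly that — Lemma \ref{Le-1} preserves the local degrees $\alpha_{j}$ over the third branch value, the zero and pole orders are multiplied by $k$, and the remaining roots lie off the branch locus $\{0,-1,\infty\}$ of $\varphi\circ g$, contributing $(k-1)d$ simple preimages. One cosmetic slip to fix: the image of $-1$ under $w\mapsto w^{k}$ is $(-1)^{k}$, not $0$, so the third branch value of $f$ is $(-1)^{k}$ and the auxiliary unramified fibers sit over the $k$-th roots of $(-1)^{k}$ other than $-1$; this does not affect the argument.
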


\section{Some existence results}
 We conclude the paper by presenting some existence results for a certain type of rational maps on $\overline{\mathbb{C}}$ with 3 branching points. First, we give a new proof of the following theorem which was proved in \cite{EKS84} and which is a special case in \cite{CH22}(\textbf{Theorem} 3.5).

\begin{theorem}[\cite{EKS84}]\label{Main-th2}
  There exists a rational map $f:\overline{\mathbb{C}}\rightarrow\overline{\mathbb{C}}$ with branching datum
  $$\{[\alpha_{1},\alpha_{2}],[\underbrace{2,\ldots,2}_{k}],[\underbrace{2,\ldots,2}_{k}]\},$$
  where $k\geq2,\alpha_{1}+\alpha_{2}=2k$, if and only if $\alpha_{1}=\alpha_{2}=k$.
\end{theorem}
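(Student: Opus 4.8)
The plan is to handle the two implications separately, with the forward (nontrivial) direction resting squarely on \textbf{Theorem} \ref{Main-th1} and its corollaries. The easy direction is the ``if'': given $\alpha_1=\alpha_2=k$, I would simply exhibit an explicit rational map. The candidate datum is $\{[k,k],[2,\ldots,2],[2,\ldots,2]\}$ with $d=2k$, which is exactly of the form treated in \textbf{Theorem} \ref{Main-th1} with $r=2$; so it suffices to produce a degree-$k$ map $g$ whose square $g^2$ realizes this datum. Concretely I would set $g(z)=z^{k}$ up to composing with a M\"obius transformation as in \textbf{Theorem} \ref{Main-th5}, or more flexibly take any degree-$k$ rational map and apply \textbf{Theorem} \ref{Main-th5} with the power $k=2$ to double the multiplicities of two of the branch points while appending $1$'s at the third. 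The point is that the partition $[k,k]$ of $2k$ splits into the two trivial partitions $[1,\ldots,1]$ of $k$ (each $\alpha_j=k$ being distributed as $k$ copies of $1$ across the $r=2$ sheets), which is consistent, so a realization exists.

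For the ``only if'' direction, suppose a rational map $f$ with the stated datum exists. Here $d=2k$, $r=2$, and the partitions $\mu_2=\mu_3=[\underbrace{2,\ldots,2}_{k}]$ are of the form $[rx_1,\ldots,rx_B]$ with every $x_i=1$ (so $B=C=k$), while $\mu_1=[\alpha_1,\alpha_2]$ has exactly $A=2$ parts. This is precisely the hypothesis of \textbf{Corollary} \ref{Co-2}, whose conclusion is that $r=2$ and $\mu_1=[k,k]$, i.e. $\alpha_1=\alpha_2=k$. So the forward direction is immediate once the datum is recognized as an instance of the corollary's hypothesis.

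The one point requiring care is the verification that the datum genuinely fits the template of \textbf{Theorem} \ref{Main-th1}, namely that $\mu_2$ and $\mu_3$ can be written with a common factor $r\geq 2$ dividing every entry. Since every entry of $\mu_2$ and $\mu_3$ equals $2$, the natural choice is $r=2$; and indeed \textbf{Corollary} \ref{Co-2} already asserts $r=2$ as part of its conclusion, so no ambiguity arises. The Riemann--Hurwitz consistency $\nu(\mathcal{D})=2d-2$ should be checked to confirm $(d,\mathcal{D})$ is a candidate datum in the first place: here $\nu(\mathcal{D})=(\alpha_1-1)+(\alpha_2-1)+k(2-1)+k(2-1)=2k-2+2k=2d-2$, as required. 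I expect the main (and only) obstacle to be purely expository, namely stating the ``if'' construction cleanly; the ``only if'' half is a direct citation of \textbf{Corollary} \ref{Co-2} and needs essentially no further argument.
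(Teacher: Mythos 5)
Your ``only if'' direction coincides with the paper's: the datum is an instance of \textbf{Corollary} \ref{Co-2} with $r=2$ and $A=2$, and the conclusion $\mu_{1}=[k,k]$ is immediate. The problem is the ``if'' direction. First, the reasoning ``the split of $[k,k]$ is consistent, so a realization exists'' is not valid: \textbf{Theorem} \ref{Main-th1} is a necessary condition satisfied by maps that are already assumed to exist, not an existence statement, so consistency of the splitting proves nothing. (Also, $[k,k]$ splits into the two partitions $[k]$ and $[k]$ of $k$, not into two trivial partitions $[1,\ldots,1]$ as you write.) Second, the appeal to \textbf{Theorem} \ref{Main-th5} does not apply here: that theorem takes a $g$ with three branching points and produces a datum of the shape $\{[\alpha_{1},\ldots,\alpha_{A},1,\ldots,1],[k\beta_{1},\ldots],[k\gamma_{1},\ldots]\}$, whereas in the present datum the two partitions $[2,\ldots,2]$ must arise over \emph{unbranched} fibres of $g$ and the partition $[k,k]$ must arise by \emph{merging} the two branch values of $g$ under the power map --- a different mechanism, and one that \textbf{Theorem} \ref{Main-th5} as stated cannot produce (it would force $\beta$ and $\gamma$ to be trivial partitions, contradicting their being branch data).

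Your first concrete suggestion --- $z^{k}$ post-composed with a M\"obius transformation, then squared --- is essentially the paper's construction, but the step you leave unverified is the crux: one must place the two branch values of $g$ at a pair $w,-w$ with $w\neq 0,\infty$, so that squaring identifies them into a single branch value with datum $[k,k]$, while $0$ and $\infty$ (the branch values of $w\mapsto w^{2}$) have unramified $g$-fibres of cardinality $k$, yielding the two partitions $[2,\ldots,2]$. The paper does this explicitly with $g(z)=\frac{z^{k}-1}{z^{k}+1}$, whose branch values are $-1$ and $1$ with data $[k],[k]$ and whose fibres over $0$ and $\infty$ consist of $k$ simple points each; then $f=g^{2}$ has the required datum. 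Without specifying the M\"obius transformation and performing this check, the ``if'' half of your argument is incomplete.
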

\begin{proof}
The necessary is from \textbf{Corollary} \ref{Co-2}.\par
 ``If ". Set $g(z)=\frac{z^{k}-1}{z^{k}+1}$. By direct calculation, the branching points of $g$ are $-1,1$, $g^{-1}(-1)=\{0\},g^{-1}(1)=\{\infty\}$ and the branching datum of $g$ is $\{[k],[k]\} $. Set $f=g^{2}$. Then the branching points of $f$ are $1,0,\infty$ and the branching datum of $f$ is
 $$\{[k,k],[\underbrace{2,\ldots,2}_{k}],[\underbrace{2,\ldots,2}_{k}]\}.$$

\end{proof}

Secondly, we can derive the following theorem.
\begin{theorem}\label{Main-th3}
  Suppose that $k\geq 3$. There exists a degree-$2k$ rational map $f:\overline{\mathbb{C}}\rightarrow\overline{\mathbb{C}}$ with branching datum
  $$\{[\alpha_{1},\alpha_{2},\alpha_{3}],[\underbrace{2,\ldots,2}_{k}],[\underbrace{2,\ldots,2}_{k-2},4]\},$$
 if and only if one of the $\alpha_{j}$'s is $k$.
\end{theorem}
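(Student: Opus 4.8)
The plan is to exploit \textbf{Theorem} \ref{Main-th1} in both directions, writing $d=2k=rk$ with $r=2$. First I would check that the hypotheses of \textbf{Theorem} \ref{Main-th1} hold: the partition $[\underbrace{2,\ldots,2}_{k}]=[r\cdot1,\ldots,r\cdot1]$ and the partition $[\underbrace{2,\ldots,2}_{k-2},4]=[r\cdot1,\ldots,r\cdot1,r\cdot2]$ are both composed of multiples of $r=2$, so that any realization $f$ is, up to two M\"{o}bius transformations, of the form $f=g^{2}$ for a degree-$k$ rational map $g$. The theorem then identifies the branching datum of $g$: the contribution of $[\underbrace{2,\ldots,2}_{k}]$ is trivial and hence expunged, the contribution of $[\underbrace{2,\ldots,2}_{k-2},4]$ is $[\underbrace{1,\ldots,1}_{k-2},2]$, and $[\alpha_{1},\alpha_{2},\alpha_{3}]$ splits into two partitions of $k$.

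For the necessity (``only if'') I would argue by a counting of parts. Since $\mu_{1}=[\alpha_{1},\alpha_{2},\alpha_{3}]$ is a partition of $2k$ with exactly three parts, under the splitting provided by \textbf{Theorem} \ref{Main-th1} these three parts are distributed into two nonempty groups, each summing to $k$ (these are the fibres $g^{-1}(i)$ and $g^{-1}(-i)$, whose local degrees agree with those of $f$ over $-1$ by \textbf{Lemma} \ref{Le-1}). Distributing three parts into two nonempty groups forces one group to be a single part; that part must then equal $k$. Hence one of the $\alpha_{j}$ equals $k$, which is the asserted necessary condition.

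For the sufficiency (``if'') I would write down an explicit $g$, in the spirit of the proof of \textbf{Theorem} \ref{Main-th2}. Assuming $\alpha_{3}=k$, so that $\alpha_{1}+\alpha_{2}=k$ with $\alpha_{1},\alpha_{2}\geq1$, I would set $P(z)=z^{\alpha_{1}}(z-\delta)^{\alpha_{2}}$ and
\[
g(z)=i\,\frac{P(z)-1}{P(z)+1},\qquad f=g^{2},
\]
for a suitable constant $\delta\neq0$. One checks directly that $g$ is totally ramified over $i$ (only at $z=\infty$, profile $[k]$), that $g^{-1}(-i)=\{0,\delta\}$ has profile $[\alpha_{1},\alpha_{2}]$, that $g^{-1}(0)=P^{-1}(1)$, and that the poles of $g$ are the zeros of $P+1=P^{-1}(-1)$. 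Squaring then sends these four fibres to the three branch values $0,-1,\infty$ of $f$: over $-1$ the profile is $[\alpha_{1},\alpha_{2},k]$, over $0$ it is $[\underbrace{2,\ldots,2}_{k}]$ once $0$ is seen to be a regular value of $g$, and over $\infty$ the pole orders of $g$ are doubled.

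The crux — and the only real obstacle — is to choose $\delta$ so that the pole divisor of $g$, i.e. the zero set of $P+1$, has profile $[\underbrace{1,\ldots,1}_{k-2},2]$, which after doubling produces exactly $[\underbrace{2,\ldots,2}_{k-2},4]$. Here I would use that
\[
P'(z)=z^{\alpha_{1}-1}(z-\delta)^{\alpha_{2}-1}\bigl(kz-\alpha_{1}\delta\bigr),
\]
so that apart from $0$ and $\delta$ (which lie over $P=0$) the polynomial $P$ has exactly one further critical point $b=\alpha_{1}\delta/k$, exhausting all $k-1$ critical points. Thus $P+1$ can have at most one multiple zero, at $b$, and it is a genuine double zero precisely when $P(b)=-1$; a direct computation gives $P(b)=(-1)^{\alpha_{2}}\alpha_{1}^{\alpha_{1}}\alpha_{2}^{\alpha_{2}}\delta^{k}/k^{k}$, so this amounts to the single equation
\[
\delta^{k}=\frac{(-1)^{\,1-\alpha_{2}}\,k^{k}}{\alpha_{1}^{\alpha_{1}}\alpha_{2}^{\alpha_{2}}},
\]
which always admits a nonzero solution. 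For such a $\delta$, the values $1$ and $-1$ of $P$ are, respectively, a regular value (giving the $k$ simple points of $g^{-1}(0)$, so that $0$ is regular for $g$) and a critical value with a single double point (giving the profile $[\underbrace{1,\ldots,1}_{k-2},2]$ of the poles of $g$). I expect the remaining bookkeeping — confirming that none of the prescribed points collide and that the simple zeros of $P+1$ stay simple — to be routine once the critical-point count above is in place.
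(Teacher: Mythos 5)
Your proof is correct, and while the necessity half coincides with the paper's argument, the sufficiency half takes a genuinely different route. For the ``only if'' direction you do exactly what the paper does: apply \textbf{Theorem} \ref{Main-th1} with $r=2$ to split $[\alpha_{1},\alpha_{2},\alpha_{3}]$ into two partitions of $k$, and observe that three positive parts distributed into two groups each summing to $k$ force a singleton group equal to $k$ (the paper states this more tersely but it is the same argument). For the ``if'' direction the paper simply invokes the Boccara--Thom realizability result to obtain a degree-$k$ map $g$ with datum $\{[k],[\alpha_{2},\alpha_{3}],[\underbrace{1,\ldots,1}_{k-2},2]\}$ and then sets $f=g^{2}$; you instead build such a $g$ explicitly as $g=i\,(P-1)/(P+1)$ with $P(z)=z^{\alpha_{1}}(z-\delta)^{\alpha_{2}}$, and your critical-point bookkeeping is sound: $P'(z)=z^{\alpha_{1}-1}(z-\delta)^{\alpha_{2}-1}(kz-\alpha_{1}\delta)$ shows the only critical value of $P$ other than $0$ is $P(\alpha_{1}\delta/k)=(-1)^{\alpha_{2}}\alpha_{1}^{\alpha_{1}}\alpha_{2}^{\alpha_{2}}\delta^{k}/k^{k}$, the equation $P(b)=-1$ is solvable in $\delta\neq0$, and then $1$ is automatically a regular value of $P$ while $P+1$ has exactly one double zero, giving $g$ the datum $\{[k],[\alpha_{1},\alpha_{2}],[\underbrace{1,\ldots,1}_{k-2},2]\}$ before squaring. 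The trade-off is clear: the paper's version is shorter but outsources the existence of $g$ to \cite{Bo82,Th65}, whereas yours is self-contained and in the same constructive spirit as the paper's proof of \textbf{Theorem} \ref{Main-th2}, at the cost of the extra computation; the remaining checks you defer (simple zeros of $P+1$ away from $b$, non-collision of the distinguished points) do indeed go through because every multiple zero of $P+1$ would have to be a critical point of $P$ and only $b$ is available.
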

\begin{proof}
``If ". Without loss of generality, we can suppose that $\alpha_{1}=k$. By a result of Boccara \cite{Bo82} or Thom \cite{Th65}, there exists a rational map $g:\overline{\mathbb{C}}\rightarrow\overline{\mathbb{C}}$ with branching datum
$$\{[k],[\alpha_{2},\alpha_{3}],[\underbrace{1,\ldots,1}_{k-2},2]\}.$$
Up to a M\"{o}bius transformation, we may suppose the branching points of $g$ are $-1,1,0$ corresponding the partitions $[k],[\alpha_{2},\alpha_{3}],[\underbrace{1,\ldots,1}_{k-2},2]$, respectively.
Then $f=g^{2}$ is the desired function.\par
``Only if ". If $f:\overline{\mathbb{C}}\rightarrow\overline{\mathbb{C}}$ is a rational map with branching datum
 $$\{[\alpha_{1},\alpha_{2},\alpha_{3}],[\underbrace{2,\ldots,2}_{k}],[\underbrace{2,\ldots,2}_{k-2},4]\},$$
then, by \textbf{Theorem} \ref{Main-th1}, $[\alpha_{1},\alpha_{2},\alpha_{3}]$ can be split into 2 partitions of $k$. Thus one of the $\alpha_{j}$'s is $k$.
\end{proof}

In general, we obtain the following theorem.
\begin{theorem}
 Suppose that $k\geq3$ and $x\geq1$. There exists a degree-$2k$ rational map $f:\overline{\mathbb{C}}\rightarrow\overline{\mathbb{C}}$ with branching datum
  $$\{[\alpha_{1},\alpha_{2},\ldots,\alpha_{x+1}],[\underbrace{2,\ldots,2}_{k}],[\underbrace{2,\ldots,2}_{k-x},2x]\},$$
  if and only if $[\alpha_{1},\alpha_{2},\ldots,\alpha_{x+1}]$ can be split into two partitions of $k$ and $\frac{k}{{\rm GCD}(\alpha_{1},\alpha_{2},\ldots,\alpha_{x+1})}\geq x$.
\end{theorem}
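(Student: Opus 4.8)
The plan is to deduce this from \textbf{Theorem} \ref{Main-th1} (with $r=2$) together with the elementary fact that a rational map all of whose zeros and poles have order divisible by $\delta$ is a $\delta$-th power. Since $\mu_{2}=[\underbrace{2,\ldots,2}_{k}]$ and $\mu_{3}=[\underbrace{2,\ldots,2}_{k-x},2x]$ have all parts divisible by $r=2$, \textbf{Theorem} \ref{Main-th1} applies: up to M\"obius transformations a realization exists if and only if $f=g^{2}$ for a degree-$k$ rational map $g$ whose branching datum is $\{[x,\underbrace{1,\ldots,1}_{k-x}],P_{1},P_{2}\}$, where $P_{1},P_{2}$ are partitions of $k$ with $P_{1}\cup P_{2}=[\alpha_{1},\ldots,\alpha_{x+1}]$ (the partition $[x,1^{k-x}]$ comes from $\mu_{3}$, while the trivial partition coming from $\mu_{2}$ is expunged, per the two Remarks following \textbf{Theorem} \ref{Main-th1}). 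In particular the existence of such a splitting is already forced, which is the first stated condition. So the whole statement reduces to deciding when the degree-$k$ datum $\{[x,1^{k-x}],P_{1},P_{2}\}$ is realizable, and I claim this happens exactly when $m:=k/\delta\geq x$, where $\delta:={\rm GCD}(\alpha_{1},\ldots,\alpha_{x+1})$; note $\delta\mid k$ since each $P_{i}$ is a partition of $k$ into parts divisible by $\delta$.

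For this reduced problem I would place the three branch values of $g$ at $i,-i,\infty$ (with data $P_{1},P_{2},[x,1^{k-x}]$ respectively, and $g$ unramified over $0$) and pass to $G:=\frac{g-i}{g+i}$, a degree-$k$ rational map with zeros of orders $P_{1}$, poles of orders $P_{2}$, ramified over $1$ with datum $[x,1^{k-x}]$ (because $g=\infty\Leftrightarrow G=1$) and unramified elsewhere. Every zero and pole of $G$ has order divisible by $\delta$, so $G=H^{\delta}$ for a rational map $H$ of degree $m=k/\delta$, and conversely $g=i\,\frac{1+H^{\delta}}{1-H^{\delta}}$ recovers $g$ from any such $H$. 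Under $G=H^{\delta}$ the zero/pole data of $G$ correspond to zero/pole data $Q_{1}:=P_{1}/\delta$ and $Q_{2}:=P_{2}/\delta$ of $H$ (partitions of $m$), while $G^{-1}(1)=H^{-1}(\mu_{\delta})$, with $\mu_{\delta}$ the set of $\delta$-th roots of unity. Hence $f$ exists if and only if there is a degree-$m$ rational map $H$ realizing the datum $\{Q_{1},Q_{2},[x,1^{m-x}]\}$ over $0,\infty$ and a single chosen $\delta$-th root of unity, and unramified over the remaining roots of unity and everywhere else. Note $\ell(Q_{1})+\ell(Q_{2})=x+1$ and ${\rm GCD}$ of all parts of $Q_{1}\cup Q_{2}$ equals $1$.

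Necessity of $m\geq x$ is then immediate. If $g$, hence $H$, exists, let $p$ be the point over $g=\infty$ of multiplicity $x$, so $\mathrm{mult}_{p}(G-1)=x$. Writing $G-1=H^{\delta}-1=\prod_{\eta^{\delta}=1}(H-\eta)$ and noting that $H(p)=\eta_{0}$ for a single $\delta$-th root of unity $\eta_{0}$, only the factor $H-\eta_{0}$ vanishes at $p$, whence $x=\mathrm{mult}_{p}(G-1)=\mathrm{mult}_{p}(H-\eta_{0})\leq \deg H=m$. This is exactly $k/\delta\geq x$, the second stated condition; read backwards, the same computation is what forces the construction below to produce the correct datum.

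For sufficiency, given a splitting and $m\geq x$, I would construct the degree-$m$ map $H$ realizing the genus-$0$ datum $\{Q_{1},Q_{2},[x,1^{m-x}]\}$, set $g=i\,\frac{1+H^{\delta}}{1-H^{\delta}}$, and take $f=g^{2}$; that $f$ then has the required datum over $0,-1,\infty$ follows from the doubling at $0,\infty$ and from \textbf{Lemma} \ref{Le-1} at $-1$ (where $g=\pm i\neq0$, so $f$ and $g$ have equal local degrees). Realizing $H$ is the heart of the matter: in monodromy terms one must produce transitive $\tau_{1},\tau_{2}\in S_{m}$ of cycle types $Q_{1},Q_{2}$ whose product is a single $x$-cycle. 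When $x=m$ this is precisely the Boccara--Thom theorem on writing a full cycle as a product of two permutations of prescribed cycle types (the input already used in \textbf{Theorem} \ref{Main-th3}); the general case $x\leq m$ has $m-x$ extra sheets, fixed by the product, that must be woven in so as to keep the monodromy transitive. I expect this transitive realization to be the main obstacle. The coprimality ${\rm GCD}(Q_{1}\cup Q_{2})=1$ is exactly what should rule out the imprimitivity obstructions responsible for exceptional data such as $\{[3,1],[2,2],[2,2]\}$ — there both factors lie in the Klein four-group, so their product can never be a $3$-cycle — and the proof must use coprimality to guarantee that a transitive pair of the prescribed types exists. I would carry this out by assembling $\tau_{1},\tau_{2}$ from their cycles along a connected ``chain'', reducing the core to the $x=m$ case via Boccara--Thom and attaching the remaining fixed sheets, verifying transitivity with the coprimality hypothesis.
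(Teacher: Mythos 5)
Your first step coincides with the paper's: apply \textbf{Theorem} \ref{Main-th1} with $r=2$ to reduce everything to the realizability of the degree-$k$ datum $\{P_{1},P_{2},[\underbrace{1,\ldots,1}_{k-x},x]\}$, where $P_{1}\cup P_{2}=[\alpha_{1},\ldots,\alpha_{x+1}]$ is the forced splitting, and recover $f=g^{2}$ in the converse direction via \textbf{Lemma} \ref{Le-1}. Your proof of the necessity of $k/{\rm GCD}(\alpha_{1},\ldots,\alpha_{x+1})\geq x$ — passing to $G=\frac{g-i}{g+i}$, writing $G=H^{\delta}$ because every zero and pole of $G$ has order divisible by $\delta$, and bounding the multiplicity of $H-\eta_{0}$ at the distinguished point by $\deg H=k/\delta$ — is correct and is genuinely more self-contained than the paper, which at this point simply cites Boccara \cite{Bo82}.

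The gap is in the ``if'' direction. The whole content of sufficiency is the realizability of the genus-zero datum $\{P_{1},P_{2},[\underbrace{1,\ldots,1}_{k-x},x]\}$ in degree $k$ under the hypothesis $k/{\rm GCD}\geq x$; this is exactly the general form of Boccara's theorem (writing an $x$-cycle in $S_{k}$, $x\leq k$, as a product of two permutations of prescribed cycle types generating a transitive group), which the paper invokes as a black box for both directions. Your reduction via $G=H^{\delta}$ replaces this by the coprime datum $\{Q_{1},Q_{2},[x,\underbrace{1,\ldots,1}_{m-x}]\}$ in degree $m=k/\delta$, but that is still an instance of the same general statement — the Boccara--Thom full-cycle result you cite covers only $x=m$ — and you explicitly leave its proof as a plan (``I expect this transitive realization to be the main obstacle''). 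Producing transitive $\tau_{1},\tau_{2}$ of types $Q_{1},Q_{2}$ whose product is an $x$-cycle with $m-x$ extra fixed points is a nontrivial combinatorial theorem, not a routine verification, so as written the sufficiency half is unproved. Either cite \cite{Bo82} for the general statement, as the paper does (which also renders the $H^{\delta}$ reduction unnecessary in this direction), or supply the monodromy construction in full.
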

\begin{proof}
``Only if ". If $f:\overline{\mathbb{C}}\rightarrow\overline{\mathbb{C}}$ is a rational map with branching datum
  $$\{[\alpha_{1},\alpha_{2},\ldots,\alpha_{x+1}],[\underbrace{2,\ldots,2}_{k}],[\underbrace{2,\ldots,2}_{k-x},2x]\},$$
then, by \textbf{Theorem} \ref{Main-th1}, there exists a rational map $g$ with branching datum
$$\{[\alpha_{1}^{1},\ldots,\alpha_{1}^{r_{1}}],[\alpha_{2}^{1},\ldots,\alpha_{2}^{r_{2}}],[1,\ldots,1,x]\}$$
such that $f=g^{2}$, where$ [\alpha_{1}^{1},\ldots,\alpha_{1}^{r_{1}},\alpha_{2}^{1},\ldots,\alpha_{2}^{r_{2}}]=[\alpha_{1},\alpha_{2},\ldots,\alpha_{x+1}]$. By a result of Boccara \cite{Bo82}, $\frac{k}{{\rm GCD}(\alpha_{1},\alpha_{2},\ldots,\alpha_{x+1})}\geq x$.\par
``If ". Suppose $[\alpha_{1},\alpha_{2},\ldots,\alpha_{x+1}]$ can be split into two partitions of $k$. Denote the two partitions by $[\alpha_{1}^{1},\ldots,\alpha_{1}^{r_{1}}]$ and $[\alpha_{2}^{1},\ldots,\alpha_{2}^{r_{2}}]$. Since
$${\rm GCD}(\alpha_{1}^{1},\ldots,\alpha_{1}^{r_{1}},\alpha_{2}^{1},\ldots,\alpha_{2}^{r_{2}})={\rm GCD}(\alpha_{1},\alpha_{2},\ldots,\alpha_{x+1}),$$
then
$$\frac{k}{{\rm GCD}(\alpha_{1}^{1},\ldots,\alpha_{1}^{r_{1}},\alpha_{2}^{1},\ldots,\alpha_{2}^{r_{2}})}\geq x.$$
So by a result of Boccara \cite{Bo82}, there exists a rational map $g:\overline{\mathbb{C}}\rightarrow\overline{\mathbb{C}}$ with branching datum
$$\{[\alpha_{1}^{1},\ldots,\alpha_{1}^{r_{1}}],[\alpha_{2}^{1},\ldots,\alpha_{2}^{r_{2}}],[1,\ldots,1,x]\}.$$
Up to a M\"{o}bius transformation, we may suppose the branching points of $g$ are $-1,1,0$ corresponding the partitions $[\alpha_{1}^{1},\ldots,\alpha_{1}^{r_{1}}],[\alpha_{2}^{1},\ldots,\alpha_{2}^{r_{2}}],[1,\ldots,1,x]$, respectively. Then $f=g^{2}$ is the desired function.\par

\end{proof}

Finally, we provide more explicit examples to explain our results.
\begin{example}
There exists a rational map $f:\overline{\mathbb{C}}\rightarrow\overline{\mathbb{C}}$ with branched data
  $$\{[\alpha_{1},\alpha_{2},\alpha_{3},2,2],[3,3,3],[3,3,3]\},$$
  where $\alpha_{1}+\alpha_{2}+\alpha_{3}=5,\alpha_{1}\geq\alpha_{2}\geq\alpha_{3}$, if and only if $\alpha_{1}=3$.
\end{example}
\begin{proof}
``Only if " . Obviously $\alpha_{1}=2$ or $3$. If $\alpha_{1}=2$, then $\alpha_{2}=2,\alpha_{3}=1$. So $[\alpha_{1},\alpha_{2},\alpha_{3},2,2]=[2,2,1,2,2]$. But $[2,2,1,2,2]$ cannot be split into 3 partitions of $3$. Thus $\alpha_{1}=3$.\par
``If ". If $\alpha_{1}=3$, then $\alpha_{2}=\alpha_{3}=1$.  By a result of Boccara \cite{Bo82} or Thom \cite{Th65} there exists a rational map $g$ with degree $3$ and branching datum
$$\{[3],[2,1],[2,1]\}.$$
Using two M\"{o}bius transformations, we can suppose the branching points of $g$ are $1,e^{\frac{2\pi}{3}i},e^{\frac{4\pi}{3}i}$ and $\infty\overline{\in}f^{-1}(1)\cup f^{-1}(e^{\frac{2\pi}{3}i})\cup f^{-1}(e^{\frac{4\pi}{3}i})$, then $f=g^{3}$ is a rational map with degree $9$, branching points $1,0,\infty$ and branching datum
 $$\{[3,1,1,2,2],[3,3,3],[3,3,3]\}.$$
\end{proof}

\begin{example}
There exists a rational map $f:\overline{\mathbb{C}}\rightarrow\overline{\mathbb{C}}$ with branching datum
  $$\{[\alpha_{1},\alpha_{2},\alpha_{3},2,2,2,2],[3,3,3,3,3],[3,3,3,3,3]\},$$
  where $\alpha_{1}+\alpha_{2}+\alpha_{3}=7,\alpha_{1}\geq\alpha_{2}\geq\alpha_{3}$, if and only if $\alpha_{1}=5$ or $\alpha_{1}=\alpha_{2}=3$.
\end{example}
\begin{proof}
``Only if " . Obviously $\alpha_{1}=3,4$ or $5$. When $\alpha_{1}=3$, then $\alpha_{2}=2,\alpha_{3}=2$ or $\alpha_{2}=3,\alpha_{3}=1$. When $\alpha_{1}=3,\alpha_{2}=2$ and $\alpha_{3}=2$, since $[3,2,2,2,2,2,2]$ cannot be split into 3 partitions of $5$, so we exclude this case. When $\alpha_{1}=3,\alpha_{2}=3$ and $\alpha_{3}=1$, $[3,3,1,2,2,2,2]$ can be split into 3 partitions of $5$. When $\alpha_{1}=4$, since $\alpha_{2}+\alpha_{3}=3$, then there is just one odd in $[4,\alpha_{2},\alpha_{3},2,2,2,2]$, so $[4,\alpha_{2},\alpha_{3},2,2,2,2]$ cannot be split into 3 partitions of $5$. When $\alpha_{1}=5$, then $\alpha_{2}=\alpha_{3}=1$, and $[5,1,1,2,2,2,2]$ can be split into 3 partitions of $5$. In summary, $\alpha_{1}=5$ or $\alpha_{1}=\alpha_{2}=3$.\par
``If ". If $\alpha_{1}=5$, then $\alpha_{2}=\alpha_{3}=1$.  By a result of Boccara \cite{Bo82} or Thom \cite{Th65} there exists a rational map $g$ with degree $5$ and branching datum
$$\{[5],[2,2,1],[2,2,1]\}.$$
Using two M\"{o}bius transformations, we can suppose the branching points of $g$ are $1,e^{\frac{2\pi}{3}i},e^{\frac{4\pi}{3}i}$ and $\infty\overline{\in}g^{-1}(1)\cup g^{-1}(e^{\frac{2\pi}{3}i})\cup g^{-1}(e^{\frac{4\pi}{3}i})$, then $f=g^{3}$ is a rational map with degree $15$, branching points $1,0,\infty$ and branching datum
 $$\{[5,1,1,2,2,2,2],[3,3,3,3,3],[3,3,3,3,3]\}.$$

If $\alpha_{1}=\alpha_{2}=3$, then $\alpha_{3}=1$.  By a result of Zheng \cite{Zhe06} there exists a rational map $g$ with degree $5$ and branching datum
$$\{[3,2],[3,2],[2,2,1]\}.$$
Using two M\"{o}bius transformations, we can suppose the branching points of $g$ are $1,e^{\frac{2\pi}{3}i},e^{\frac{4\pi}{3}i}$ and $\infty\overline{\in}f^{-1}(1)\cup f^{-1}(e^{\frac{2\pi}{3}i})\cup f^{-1}(e^{\frac{4\pi}{3}i})$, then $f=g^{3}$ is a rational map with degree $15$, branching points $1,0,\infty$ and branching datum
 $$\{[3,3,1,2,2,2,2],[3,3,3,3,3],[3,3,3,3,3]\},$$
\end{proof}

\section*{Acknowledgments}
~~~~
We would like to express our gratitude to the reviewers for their constructive comments and suggestions, which have helped to improve the quality and clarity of this manuscript. This work was initiated during Wei's visit to the Chern Institute of Mathematics in the fall of 2023. Wei would like to express his gratitude to Professor Zizhou Tang for the warm hospitality he received. Wei is partially supported by the National Natural Science Foundation of China (Grant No. 12171140). Wu is partially supported by the Fundamental Research Funds for the Central Universities and the CAS Project for Young Scientists in Basic Research (Grant No. YSBR-001). Xu is supported in part by the National Natural Science Foundation of China (Grant Nos. 12271495, 11971450, and 12071449) and the CAS Project for Young Scientists in Basic Research (Grant No. YSBR-001).

\textbf{Declarations}

\textbf{Data Availability Statement}  This manuscript has no associated data.

\textbf{Competing interests} On behalf of all authors, the corresponding author declares that there is no conflict of interest.


\noindent
Zhiqiang Wei\\
School of Mathematics and Statistics, Henan University, Kaifeng 475004 P.R. China\\
Center for Applied Mathematics of Henan Province, Henan University, Zhengzhou 450046 P.R. China\\
Email: weizhiqiang15@mails.ucas.edu.cn. ~or~10100123@vip.henu.edu.cn.\\
Yingyi Wu\\
School of Mathematical Sciences, University of Chinese Academy of Sciences, Beijing 100049, P.R. China\\
Email: wuyy@ucas.ac.cn\\
Bin Xu\\
CAS Wu Wen-Tsun Key Laboratory of Mathematics and School of Mathematical Sciences\\
University of Science and Technology of China, Hefei 230026, P.R. China\\
Email: bxu@ustc.edu.cn.

\end{document}